\theoremstyle{plain}
\newtheorem{thm}{Theorem}[section]
\newtheorem{main}{Theorem}
\newtheorem{lem}[thm]{Lemma}
\newcommand{\Q}{\mathbb{Q}}
\newcommand{\Z}{\mathbb{Z}}
\newcommand{\B}{\mathcal{B}}
\DeclareMathOperator{\End}{End}
\title[Positive laws on large sets of generators]{Positive laws on large sets of generators:
counterexamples for infinitely generated groups}
\author{Cristina Acciarri}
\address{\textnormal{Cristina Acciarri,}\\ Dipartimento di Matematica Pura ed Applicata\\
Universit\`a degli Studi  dell'Aquila\\
I-67010 Coppito, L'Aquila (Italy)}
\email{acciarricristina@yahoo.it}
\author{Gustavo A. Fern\'andez-Alcober}
\address{ \textnormal{Gustavo A. Fern\'andez-Alcober,} Matematika Saila\\ Euskal Herriko Unibertsitatea
\\ 48080 Bilbao (Spain)}
\email{gustavo.fernandez@ehu.es}
\subjclass[2000]{Primary 20E99}
\keywords{Positive laws, residually finite $p$-groups}
\begin{document}

\begin{abstract}
Shumyatsky and the second author proved that if $G$ is a finitely generated residually finite $p$-group satisfying a law, then, for almost all primes, the fact that a normal and commutator-closed set of generators satisfies a positive law implies that the whole of $G$ also satisfies a (possibly different) positive law.
In this paper, we construct a counterexample showing that the hypothesis of
finite generation of the group $G$ cannot be dispensed with.
\end{abstract}

\maketitle

\section{Introduction}

A group word is called \emph{positive} if it does not involve any inverses of
the variables.
If $\alpha$ and $\beta$ are two different positive words, a subset $T$ of a group $G$
is said to satisfy the \emph{positive law} $\alpha\equiv \beta$ if every substitution
of elements of $T$ for the variables gives the same value for $\alpha$ and for
$\beta$.
The \emph{degree} of the law is the maximum of the lengths of the words $\alpha$
and $\beta$.
A prominent positive law is the \emph{Malcev law} $M_c(x,y)$ given by the relation
$\alpha_c(x,y)\equiv \beta_c(x,y)$, where $\alpha_c$ and $\beta_c$ are defined
by $\alpha_0=x$, $\beta_0=y$, and the recursive relations
\[
\alpha_c=\alpha_{c-1}\beta_{c-1}
\quad
\text{and}
\quad
\beta_c=\beta_{c-1}\alpha_{c-1}.
\]
Thus $M_1(x,y)$ is the abelian law $xy\equiv yx$, and $M_2(x,y)$ is the law
$xyyx\equiv yxxy$.
Throughout this paper, when we speak about a Malcev law $M_c(x,y)$, we always
assume that $c\ge 1$.

Every nilpotent group of class $c$ satisfies the law $M_c(x,y)$, and an
extension of a nilpotent group of class $c$ by a group of finite exponent $e$
satisfies the positive law $M_c(x^e,y^e)$.
Malcev asked whether, conversely, a group which satisfies a positive law is
nilpotent-by-(finite exponent).
This question was answered in the negative by Olshanskii and Storozhev in
\cite{ols}.
However, the answer is positive for a large class of groups: Burns and
Medvedev proved in \cite{bur} that \emph{a locally graded group satisfying a
positive law is nilpotent-by-(locally finite of finite exponent)}.
(See also the paper \cite{baj} by Bajorska and Macedo\'{n}ska.)

An interesting question regarding positive laws is the following: under what
conditions does a positive law on a set $T$ of generators of a group $G$ imply
a (possibly different) positive law on the whole of $G$?
This problem is inspired by the following particular but important case: is it
true that a positive law on the set of all values of a word $w$ in a group $G$
implies a positive law on the verbal subgroup $w(G)$?
One of the conditions that must be certainly fulfilled in the first question is
that the set $T$ of generators has to be large in some sense.
For example, a free product $G=P*Q$ of two finite $p$-groups is generated by the
set $T=P\cup Q$, which satisfies a positive law of the form $x^{p^k}\equiv 1$,
but $G$ does not satisfy a positive law unless $|P|,|Q|\leq2$.
On the other hand, the set of values of a word is to some extent large; note that
it is a normal subset and, on occasions, also commutator-closed (i.e.\ closed
under taking commutators of its elements).
This happens, for example, with the simple commutators $[x_1,\ldots,x_m]$,
and with the derived words.

Shumyatsky and the second author \cite{fer} have considered the question of the previous
paragraph in the realm of finitely generated residually finite $p$-groups.
One of their main results is the following: \emph{for every $n$, there exists
a finite set $P(n)$ of primes such that, if $p\not\in P(n)$ and $G$ is a finitely
generated residually finite $p$-group which satisfies a law and which can be generated by a normal
and commutator-closed $T$ satisfying a positive law of degree $n$, then also $G$
satisfies a positive law.}
Thus `normal and commutator-closed' is a valid sense of largeness in the above setting
(for example, for soluble residually finite $p$-groups), a fact which can be applied to several
important instances of the problem for word values and verbal subgroups.

Our goal in this paper is to show that the hypothesis of finite generation of $G$
cannot be dispensed with in the previous result.
More precisely, we prove the following.

\begin{main}
\label{mainthm}
For every $c\ge 3$, there exists an infinitely generated metabelian group $G$
such that:
\begin{enumerate}
\item
$G$ is a residually finite $p$-group for all primes $p$.
\item
$G$ can be generated by a commutator-closed normal subset $T$ satisfying the positive law
$M_c(x,y)$.
\item
$G$ does not satisfy any positive laws.
\end{enumerate}
\end{main}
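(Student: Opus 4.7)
\section*{Proof plan for Theorem~\ref{mainthm}}

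The plan is to construct $G$ as an infinite direct sum $G=\bigoplus_{m\ge 1}G_m$ of finitely generated torsion-free metabelian nilpotent blocks $G_m$ whose nilpotency class grows with $m$, so that $G$ itself fails to be nilpotent. Each block will be a semidirect product $G_m=A_m\rtimes B_m$ with $B_m=\langle t_1^{(m)},\ldots,t_m^{(m)}\rangle$ free abelian of rank $m$ and $A_m=\Z[x_1,\ldots,x_m]/(x_1^2,\ldots,x_m^2)$ a free abelian group of rank $2^m$ on which $t_i^{(m)}$ acts by multiplication by $1+x_i$. The square-zero relations $x_i^2=0$ make each individual operator $t_i^{(m)}-1$ nilpotent of order $2$, while the top monomial $x_1x_2\cdots x_m$ is nonzero and gives $G_m$ nilpotency class exactly $m+1$.

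Inside each $G_m$ let
\[
T_m \;=\; \{\,(v,q)\in G_m : v\in A_m,\ q\in\{1\}\cup\{(t_i^{(m)})^{\pm 1}:1\le i\le m\}\,\}.
\]
Then $T_m$ is $G_m$-normal (the $B_m$-coordinate is preserved by conjugation, as $B_m$ is abelian), commutator-closed (commutators land in $A_m\subseteq T_m$ by the metabelian structure), and generates $G_m$ (it contains $A_m$ together with every $t_i^{(m)}$). A direct pair-analysis shows that any two elements $t,t'\in T_m$ generate a subgroup of nilpotency class at most $3$: if their $B_m$-images lie in a single $\langle t_i^{(m)}\rangle$, then $(t_i^{(m)}-1)^2=x_i^2=0$ forces class at most $2$; if they span $\langle t_i^{(m)},t_j^{(m)}\rangle$, then any product $(t_i^{(m)}-1)^a(t_j^{(m)}-1)^b$ with $a+b\ge 3$ equals $x_i^ax_j^b=0$, again bounding the class by $3$. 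Consequently $T_m$ satisfies the Malcev law $M_c$ for every $c\ge 3$. Setting $G=\bigoplus_{m\ge 1}G_m$ and $T=\bigcup_{m\ge 1}T_m$ inside $G$ via the natural coordinate embeddings yields the candidate.

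Conditions (i)--(iii) are verified in turn. For (i), each $G_m$ is finitely generated torsion-free nilpotent and hence residually a finite $p$-group for every prime $p$ by a classical theorem of Gruenberg, a property inherited by the direct sum $G$. For (ii), normality, commutator-closure, and generation of $T$ in $G$ follow componentwise from the corresponding properties of each $T_m$, and the law $M_c$ holds on $T$ because any two elements either lie in a common $T_m$ (where $M_c$ holds by the pair-analysis) or come from distinct summands and therefore commute. For (iii), suppose $G$ satisfies a positive law; since $G$ is metabelian and hence locally graded, the Burns--Medvedev theorem yields a nilpotent normal subgroup $N\trianglelefteq G$ and an integer $e\ge 1$ with $G/N$ of exponent $e$, so $G^e\subseteq N$. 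But the verbal subgroup $G^e=\bigoplus_m G_m^e$ contains $\bigoplus_m(eA_m\rtimes B_m^e)$, and a direct computation using $(t_i^{(m)})^e-1=ex_i$ shows that $eA_m\rtimes B_m^e$ has nilpotency class exactly $m+1$, witnessed by $e^{m+1}x_1x_2\cdots x_m\ne 0$ in $\gamma_{m+1}$. Hence $G^e$ contains subgroups of unbounded class, contradicting its containment in the nilpotent $N$.

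The technical heart of the argument is the pair-analysis bounding the class of $\langle t,t'\rangle$ by $3$ for $t,t'\in T_m$, together with the subsequent computation of the class of $eA_m\rtimes B_m^e$; every other step follows cleanly from the semidirect formalism, Gruenberg's theorem, and Burns--Medvedev. What makes the construction work is the tension between the square-zero identities---which confine pairs drawn from $T_m$ to bounded class---and the surviving top monomial $x_1\cdots x_m$, which pushes the global class of each $G_m$ upward and, after passing to the direct sum, prevents $G$ from satisfying any positive law.
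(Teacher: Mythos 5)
Your proposal is correct, but it takes a genuinely different and more elementary route than the paper. The paper builds each block from the regular representation of $\Q[X_1,\ldots,X_n]/\mathfrak{c}$ for a monomial ideal $\mathfrak{c}$ engineered so that the operator identities $(t_i-1)^c=(t_i-1)(t_it_j-1)^{c-1}=0$ hold while $(t_1^e\cdots t_n^e-1)^n\ne 0$, and it relies in an essential way on a standalone characterization (Lemma~\ref{substitution} and Theorem~\ref{malcev on union of cosets}) of when a union of cosets of an abelian normal subgroup satisfies $M_c$ --- the forward direction to get $M_c$ on $T_n$, and the converse (which needs nilpotency and torsion-freeness) to show $M_n$ fails on the coset $(t_1\cdots t_n)^eA_n$. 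You replace all of this by the truncated ring $\Z[x_1,\ldots,x_m]/(x_1^2,\ldots,x_m^2)$ with $t_i$ acting as $1+x_i$, plus two observations: any pair of elements of $T_m$ generates a subgroup of class at most $3$, since $(x_i,x_j)^3=0$ in $A_m$, so $T_m$ satisfies $M_c$ for every $c\ge 3$ with no analysis of the words $\alpha_c,\beta_c$ at all; and $G^e$ contains the subgroups $eA_m\rtimes B_m^e$ of class exactly $m+1$, witnessed by $e^{m+1}x_1\cdots x_m\ne 0$, so $G^e$ cannot sit inside a nilpotent $N$ --- this sidesteps the converse half of Theorem~\ref{malcev on union of cosets} entirely. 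Both arguments are sound (your pair analysis, the normality and commutator-closure of $T$, torsion-freeness of each $G_m$, Gruenberg plus passage to the restricted product, and $G^e\le N$ all check out). The trade-off is that the paper's machinery also delivers the finer Lemma~\ref{Gn}, exhibiting for each $n\ge c$ a finitely generated nilpotent group generated by a normal commutator-closed set satisfying $M_c$ while $M_n$ fails on an explicit coset of $e$-th powers, and its Theorem~\ref{malcev on union of cosets} has independent interest; your construction proves the main theorem with considerably less machinery but does not recover that quantitative separation between Malcev laws.
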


The main tool which is needed for the construction of this counterexample is to characterize
when a union of cosets of an abelian normal subgroup satisfies a Malcev law, provided that the
representatives of the cosets commute with each other.
This is the goal of Section 2.
Once this characterization is obtained, in Section 3 we proceed to construct the counterexample,
and prove that our main theorem, Theorem \ref{mainthm} holds.
It is noteworthy that the theory of monomial ideals in polynomial algebras plays an important
role in the proof.

\section{The Malcev law on unions of cosets of an abelian normal subgroup}

If $G$ is a group and $A$ is an abelian normal subgroup of $G$, then every element
$t\in G$ defines an automorphism of $A$ by conjugation, which we denote by the same
letter $t$.
Since the set $\End(A)$ of endomorphisms of $A$ is a ring, we can combine these
automorphisms with the operations of addition and composition (which we denote by
juxtaposition).

We begin by determining when two elements in cosets $tA$ and $uA$, with $t$ and $u$
commuting, satisfy a Malcev law.

\begin{lem}
\label{substitution}
Let $G$ be a group, and let $A$ be an abelian normal subgroup of $G$.
If $t,u\in G$ commute and $a,b\in A$, then the Malcev law $M_c(x,y)$
holds for the substitution $x=ta$, $y=ub$ if and only if
\[
a^{f_c(u,t)}=b^{f_c(t,u)},
\]
where
\[
f_c(X,Y)=(X-1)\prod_{i=0}^{c-2} \, (X^{2^i}Y^{2^i}-1).
\]
\end{lem}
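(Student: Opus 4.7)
The plan is to argue by induction on $c$, exploiting the recursive definition $\alpha_c=\alpha_{c-1}\beta_{c-1}$, $\beta_c=\beta_{c-1}\alpha_{c-1}$. The key observation is that for any product of the form $(ta_1)(ub_1)\cdots$ with the $a_i,b_i\in A$, commutativity of $t$ and $u$ lets us push all $t$'s and $u$'s to the left, collecting a tail in $A$ in which each original factor from $A$ has been conjugated by a monomial in $t,u$. Since $\alpha_c$ and $\beta_c$ both contain $x$ and $y$ exactly $2^{c-1}$ times and $t,u$ commute in $G$, both $\alpha_c(ta,ub)$ and $\beta_c(ta,ub)$ have the same image $(tu)^{2^{c-1}}$ in $G/A$, so I will show one can write
\[
\alpha_c(ta,ub)=(tu)^{2^{c-1}}\cdot a^{P_c}b^{Q_c}, \qquad \beta_c(ta,ub)=(tu)^{2^{c-1}}\cdot a^{P_c'}b^{Q_c'},
\]
for suitable $P_c,Q_c,P_c',Q_c'$ in the commutative subring of $\End(A)$ generated by the actions of $t$ and $u$ (commutativity of this ring is precisely where the hypothesis $[t,u]=1$ is used). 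Consequently, $M_c$ holds for the substitution $x=ta$, $y=ub$ if and only if $a^{P_c-P_c'}=b^{Q_c'-Q_c}$.

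To extract the differences $R_c:=P_c-P_c'$ and $S_c:=Q_c'-Q_c$, I will set up recursions for $P_c,P_c',Q_c,Q_c'$ from the recursion for $\alpha_c,\beta_c$. Writing $w_{c-1}:=(tu)^{2^{c-2}}$ for the common $G/A$-image of $\alpha_{c-1}(ta,ub)$ and $\beta_{c-1}(ta,ub)$ (for $c\ge 2$), pushing the $A$-tail of the second factor past this head yields
\[
P_c=P_{c-1}w_{c-1}+P_{c-1}',\qquad P_c'=P_{c-1}'w_{c-1}+P_{c-1},
\]
and symmetric formulas for $Q_c,Q_c'$. Subtracting, the four coupled recurrences collapse to the single clean relation
\[
R_c=R_{c-1}(w_{c-1}-1),\qquad S_c=S_{c-1}(w_{c-1}-1),
\]
which telescopes to $R_c=R_1\prod_{i=0}^{c-2}((tu)^{2^i}-1)$ and likewise for $S_c$.

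For the base case $c=1$, the direct expansions $ta\cdot ub=tu\cdot a^{u}b$ and $ub\cdot ta=tu\cdot a\,b^{t}$ give $R_1=u-1$ and $S_1=t-1$. Using $(tu)^{2^i}=u^{2^i}t^{2^i}$ in our commutative operator ring, the closed forms above become $R_c=f_c(u,t)$ and $S_c=f_c(t,u)$, which is exactly the identity claimed. I expect the only delicate step to be the bookkeeping for the recursion, specifically verifying that moving the $A$-tail of $\alpha_{c-1}(ta,ub)$ past the $G/A$-head of $\beta_{c-1}(ta,ub)$ produces precisely a right multiplication by $w_{c-1}$ in the operator ring; once this is established, the rest is straightforward telescoping and a one-line check of the base case.
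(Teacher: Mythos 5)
Your proposal is correct and is essentially the same argument as the paper's: induction on $c$ via the recursion $\alpha_c=\alpha_{c-1}\beta_{c-1}$, $\beta_c=\beta_{c-1}\alpha_{c-1}$, with the factor $(t^{2^{c-2}}u^{2^{c-2}}-1)$ appearing at each step and telescoping to $f_c$. The only difference is bookkeeping: the paper tracks the single quotient word $w_c=\beta_c^{-1}\alpha_c$ (for which the recursion collapses immediately to $w_c=w_{c-1}^{\alpha_{c-1}}w_{c-1}^{-1}$), whereas you carry the four exponents $P_c,P_c',Q_c,Q_c'$ and subtract at the end -- same computation, slightly more data.
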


\begin{proof}
We define, for every $c\ge 1$, the word $w_c(x,y)=\beta_c(x,y)^{-1}\alpha_c(x,y)$.
The lemma will be proved if we see that
\[
w_c(ta,ub) = a^{f_c(u,t)}b^{-f_c(t,u)}.
\]
We argue by induction on $c$.
If $c=1$, then
\[
w_1(ta,ub)
=
(ubta)^{-1}(taub)
=
(utb^ta)^{-1}(tua^ub)
=
a^{u-1}b^{1-t},
\]
and the result is true.
Assume now that $c>1$.
Since
\[
w_c
=
\beta_c^{-1}\alpha_c
=
\alpha_{c-1}^{-1}\beta_{c-1}^{-1}\alpha_{c-1}\beta_{c-1}
=
w_{c-1}^{\alpha_{c-1}} w_{c-1}^{-1},
\]
it follows from the induction hypothesis that
\begin{equation}
\label{w_c}
w_c(ta,ub)
=
(a^{f_{c-1}(u,t)}b^{-f_{c-1}(t,u)})^{\alpha_{c-1}(ta,ub)}
(a^{-f_{c-1}(u,t)}b^{f_{c-1}(t,u)}).
\end{equation}
Now, since $A$ is abelian, in order to calculate the conjugate
in this last expression, we only need to know the value of
$\alpha_{c-1}(ta,ub)$ modulo $A$.
Since $\alpha_{c-1}$ has weight $2^{c-2}$ in both $x$ and $y$,
and $t$ and $u$ commute, it follows that
\[
\alpha_{c-1}(ta,ub) \equiv \alpha_{c-1}(t,u)
\equiv t^{2^{c-2}}u^{2^{c-2}} \pmod A.
\]
By putting this value into (\ref{w_c}), we get
\[
w_c(ta,ub)
=
a^{f_{c-1}(u,t)(t^{2^{c-2}}u^{2^{c-2}}-1)}
b^{-f_{c-1}(t,u)(t^{2^{c-2}}u^{2^{c-2}}-1)},
\]
which concludes the proof.
\end{proof}

Now we characterize when the unions of cosets of $A$ that we are interested in
satisfy a Malcev law.

\begin{thm}
\label{malcev on union of cosets}
Let $G$ be a group, and let $A$ be an abelian normal subgroup of $G$.
Consider a union of cosets $T=t_1 A\cup\cdots\cup t_n A\cup A$, where
$t_1,\ldots,t_n$ commute with each other.
Suppose that $t_1,\ldots,t_n$, as endomorphisms of $A$, satisfy the
following conditions:
\begin{enumerate}
\item
$(t_i-1)^c=0$, for all $i=1,\ldots,n$.
\item
$(t_i-1)(t_it_j-1)^{c-1}=0$, for $1\le i\ne j\le n$.
\end{enumerate}
Then the subset $T$ satisfies $M_c(x,y)$.
Conversely, if $T$ satisfies $M_c(x,y)$, and if $G$ is nilpotent and
$A$ is torsion-free, then $t_1,\ldots,t_n$ satisfy conditions (i) and
(ii) above.
\end{thm}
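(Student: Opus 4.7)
The plan is to apply Lemma~\ref{substitution} pointwise to each pair of elements of $T$. Since every element of $T$ lies in $tA$ for some $t\in\{1,t_1,\dots,t_n\}$ and the representatives pairwise commute, the lemma applies throughout, and both directions reduce to questions about when the endomorphism $f_c(t,u)\in\End(A)$, with $t,u$ chosen from $\{1,t_1,\dots,t_n\}$, vanishes on $A$.

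For the forward direction, I will show that (i) and (ii) force $f_c(t,u)=0$ for every such pair, whence the Malcev identity of Lemma~\ref{substitution} becomes trivial. The key observation is divisibility in $\Z[X,Y]$: each factor $X^{2^k}Y^{2^k}-1$ of $f_c(X,Y)$ is divisible by $XY-1$ (and by $X-1$ when $Y=1$). Consequently $f_c(X,1)$ and $f_c(X,X)$ are both divisible by $(X-1)^c$, while for $Y\neq 1$ the polynomial $f_c(X,Y)$ is divisible by $(X-1)(XY-1)^{c-1}$. Substituting $t=t_i$ with $u\in\{1,t_i\}$ then invokes (i), whereas $t=t_i$, $u=t_j$ with $i\neq j$ invokes (ii).

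For the converse, set $b=1$ in Lemma~\ref{substitution} and let $a$ range over $A$; this yields $a^{f_c(u,t)}=1$, and torsion-freeness of $A$ forces $f_c(u,t)=0$ on $A$. Specialising to $(t,u)=(1,t_i)$ and to $(t,u)=(t_i,t_j)$ produces the vanishings $f_c(t_i,1)=0$ and $f_c(t_i,t_j)=0$. It remains to extract the clean relations (i) and (ii) from these two identities.

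This extraction is the main obstacle. With $s=t_i-1$, the expansion $t_i^{2^k}-1=s\,v_k(s)$, where $v_k\in\Z[s]$ has constant term $2^k$, gives the factorisation
\[
f_c(t_i,1)=s^c\,V,\qquad V=\prod_{k=0}^{c-2}v_k(s)=2^N+sW,
\]
with $N=(c-1)(c-2)/2$. The nilpotency of $G$ makes $s$ nilpotent on $A$, so $sW$ is a nilpotent endomorphism and $V$ acts invertibly on $A\otimes\Q$; combined with $s^c V=0$ and torsion-freeness of $A$, this forces $s^c=0$, which is (i). The identical argument with $s$ replaced by $t_it_j-1$ turns $f_c(t_i,t_j)=0$ into (ii). Both hypotheses---$G$ nilpotent and $A$ torsion-free---are essential at this point: the first to make $V$ an invertible-plus-nilpotent, the second to cancel the auxiliary integer factor $2^N$.
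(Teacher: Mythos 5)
Your proof is correct and follows essentially the same route as the paper: reduce via Lemma~\ref{substitution} to the vanishing of $f_c(t,u)$, use the divisibility of $f_c(X,1)$, $f_c(X,X)$ by $(X-1)^c$ and of $f_c(X,Y)$ by $(X-1)(XY-1)^{c-1}$ for the forward direction, and for the converse cancel the cofactor using nilpotency and torsion-freeness. The only cosmetic difference is that where you make the cofactor explicitly of the form $2^N+sW$ (a unit plus a nilpotent), the paper reaches the same conclusion by noting the cofactor is coprime to $X-1$ and applying B\'ezout's identity together with $(\varphi-1)^k=0$.
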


\begin{proof}
The law $M_c(x,y)$ holds in the subset $T$ if and only if it holds for every
substitution $x=ta$, $y=ub$, where $t,u\in\{1,t_1,\ldots,t_n\}$ and $a,b\in A$.
By considering the case where $a=1$ and $b$ is arbitrary, it readily follows
from Lemma~\ref{substitution} that $T$ satisfies $M_c(x,y)$ if and only if
$f_c(t,u)$ annihilates $A$ for every $t,u\in\{1,t_1,\ldots,t_n\}$.
Put differently, a necessary and sufficient condition for $T$ to satisfy $M_c(x,y)$
is that the substitution $X\mapsto t_i$ in $f_c(X,1)$ and $f_c(X,X)$, and the substitution
$X\mapsto t_i$, $Y\mapsto t_j$ in $f_c(X,Y)$, with $i\ne j$, always induce the zero
endomorphism of $A$.

Since
\begin{equation}
\label{igualdad polinomios1}
f_c(X,1)
=
(X-1)^{c}\prod_{i=1}^{c-2} \, (X^{2^i-1}+\cdots+X+1),
\end{equation}
\begin{equation}
\label{igualdad polinomios1bis}
f_c(X,X)
=
f_c(X,1) \prod_{i=0}^{c-2} \, (X^{2^i}+1),
\end{equation}
and
\begin{equation}
\label{igualdad polinomios2}
f_c(X,Y)
=
(X-1)(XY-1)^{c-1}\prod_{i=1}^{c-2} \, ((XY)^{2^i-1}+\cdots+XY+1),
\end{equation}
it is clear that, if conditions (i) and (ii) of the statement hold, then
$T$ satisfies $M_c(x,y)$.
This proves the first assertion of the theorem.

Conversely, suppose now that $T$ satisfies $M_c(x,y)$, that $G$ is nilpotent
and that $A$ is torsion-free.
By (\ref{igualdad polinomios1}), (\ref{igualdad polinomios1bis}),
and (\ref{igualdad polinomios2}), we have
\begin{equation}
\label{igualdad polinomios3}
f_c(X,1)=(X-1)^cg_c(X),
\quad
f_c(X,X)=(X-1)^ch_c(X),
\end{equation}
and
\begin{equation}
\label{igualdad polinomios4}
f_c(X,Y)=(X-1)(XY-1)^{c-1}g_c(XY),
\end{equation}
for some polynomials $g_c(X),h_c(X)\in\Z[X]$ which are coprime to $X-1$.
Now we claim that, for every polynomial $j(X)\in\Z[X]$ which is coprime to $X-1$,
and for every automorphism $\varphi$ of $A$ which is induced by conjugation by
an element of $G$, the endomorphism $j(\varphi)$ is injective.
Once this is proved, it follows from (\ref{igualdad polinomios3}) and
(\ref{igualdad polinomios4}), and from the discussion in the first paragraph
of the proof, that (i) and (ii) must hold.

Hence it only remains to prove the claim.
Let $k$ be the nilpotency class of $G$.
Since $j(X)$ is coprime to $X-1$, by using B\'ezout's identity in $\Q[X]$ we get
an expression of the form
\begin{equation}
\label{bezout}
p(X)(X-1)^k+q(X)j(X)=m,
\end{equation}
where $p(X),q(X)\in\Z[X]$, and $m$ is a positive integer.
Now, since $G$ is nilpotent of class $k$ and $\varphi$ is induced by conjugation
by an element of $G$, we have $(\varphi-1)^k=0$.
By substituting $\varphi$ for $X$ in (\ref{bezout}), it follows that
$q(\varphi)j(\varphi)=m1_A$.
Taking into account that $A$ is torsion-free, we conclude that $j(\varphi)$ is
injective, as desired.
\end{proof}

\section{The construction of the counterexample}

The key to our counterexample is the next lemma, where we show that for every $n\ge c$
there exists a nilpotent group $G_n$ which can be generated by a normal and commutator-closed
subset $T_n$ satisfying $M_c(x,y)$, but nevertheless $G_n$ does not satisfy any law $M_k(x,y)$
for $k\le n$.
Thus the `distance' between the Malcev laws satisfied by $T_n$ and $G_n$ increases as $n$ goes
to infinity.

\begin{lem}
\label{Gn}
Let $c\ge 3$ be a fixed integer.
Then, for every $n\ge c$ there exists a finitely generated nilpotent torsion-free group
$G_n=B_n\ltimes A_n$ satisfying the following properties:
\begin{enumerate}
\item
$A_n$ and $B_n$ are abelian groups.
Thus $G_n$ is metabelian.
\item
$B_n$ can be generated by $n$ elements $t_1,\ldots,t_n$ such that the subset
$T_n=t_1A_n\cup \cdots \cup t_nA_n\cup A_n$ satisfies the law $M_c(x,y)$.
\item
$G_n$ does not satisfy $M_n(x,y)$.
More precisely, for every $e\ge 1$, the law $M_n(x,y)$ is not satisfied in the coset
$(t_1\ldots t_n)^eA_n$.
\end{enumerate}
\end{lem}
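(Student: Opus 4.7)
The plan is to realize $A_n$ as a quotient of a polynomial algebra by a carefully chosen monomial ideal, and to let each $t_i$ act as multiplication by $1+u_i$. Concretely, set $R=\Z[u_1,\ldots,u_n]$, let $\mathfrak{m}=(u_1,\ldots,u_n)$, and take
\[
J=\bigl(u_i^a u_j^b \;:\; 1\le i\ne j\le n,\ a+b\ge c\bigr),
\]
which is a monomial ideal containing every $u_i^c$. Put $A_n=R/J$; because $J$ is monomial, $A_n$ is $\Z$-free on the monomials $u_1^{b_1}\cdots u_n^{b_n}$ satisfying $b_i+b_j\le c-1$ for all $i\ne j$, hence torsion-free and of finite rank. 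Each $1+u_i$ is a unit in $R/J$ with inverse $\sum_{k=0}^{c-1}(-u_i)^k$, so multiplication by $1+u_i$ defines a commuting family of automorphisms $t_i$ of $A_n$.

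Since the $t_i$ commute, $B_n=\langle t_1,\ldots,t_n\rangle$ is abelian; it is free of rank $n$, for a relation $\prod t_i^{m_i}=1$ forces $\sum m_i u_i=0$ in the degree-$1$ part of $A_n$, and the classes $u_1,\ldots,u_n$ are $\Z$-linearly independent basis vectors (using $c\ge 3$). Put $G_n=B_n\ltimes A_n$: it is metabelian, finitely generated, torsion-free (since both factors are torsion-free and the extension is split), and nilpotent because the basis monomials have bounded total degree, so some $\mathfrak{m}^N$ lies in $J$. To invoke Theorem \ref{malcev on union of cosets} I check the two ring-theoretic hypotheses: $(t_i-1)^c$ is multiplication by $u_i^c\in J$, and $(t_i-1)(t_it_j-1)^{c-1}$ is multiplication by the polynomial $u_i(u_i+u_j+u_iu_j)^{c-1}$, every multinomial term of which is a monomial $u_i^pu_j^q$ with $p\ge 1$ and $p+q\ge c$, and therefore lies in $J$. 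Consequently $T_n$ satisfies $M_c(x,y)$.

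For (iii), write $s_e=(t_1\cdots t_n)^e$ and, via identity (\ref{igualdad polinomios1bis}), factor $f_n(X,X)=(X-1)^n h(X)$ with $h(X)\in\Z[X]$ and $h(1)=2^{\binom{n}{2}}$. Setting $v=(1+u_1)\cdots(1+u_n)-1$, one has $v\equiv u_1+\cdots+u_n\pmod{\mathfrak{m}^2}$ and $s_e-1=(1+v)^e-1\equiv ev\pmod{\mathfrak{m}^2}$, so
\[
(s_e-1)^n\equiv e^n(u_1+\cdots+u_n)^n \pmod{\mathfrak{m}^{n+1}}.
\]
The coefficient of $u_1u_2\cdots u_n$ in $(u_1+\cdots+u_n)^n$ is $n!$, and $h(s_e)\equiv h(1)\pmod{\mathfrak{m}}$, so the coefficient of $u_1\cdots u_n$ in the polynomial $f_n(s_e,s_e)$ equals $2^{\binom{n}{2}}e^n n!\ne 0$. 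Since $c\ge 3$, the monomial $u_1\cdots u_n$ satisfies the pairwise condition $1+1\le c-1$ and thus lies outside $J$; reduction modulo the monomial ideal $J$ preserves its coefficient. Hence $f_n(s_e,s_e)\cdot 1\ne 0$ in $A_n$, and Lemma \ref{substitution} (applied with suitable $a,b$) forces $M_n(x,y)$ to fail in the coset $s_e A_n$, for every $e\ge 1$.

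The main obstacle is calibrating $J$ correctly: it must be large enough to enforce both conditions of Theorem \ref{malcev on union of cosets}, which forces it to contain every $u_i^a u_j^b$ with $a+b\ge c$, yet small enough that the maximally-spread monomial $u_1\cdots u_n$ survives in $A_n$ (so that the degree-$n$ leading term of $(s_e-1)^n$ contributes nontrivially to $f_n(s_e,s_e)\cdot 1$ for every $e$). The interplay of these two constraints is precisely what makes the monomial-ideal approach succeed.
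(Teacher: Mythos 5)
Your construction is correct and follows essentially the same strategy as the paper: realize $A_n$ as the quotient of a polynomial ring by a monomial ideal that absorbs the shifted generators $u_i^c$ and $u_i(u_i+u_j+u_iu_j)^{c-1}$ while letting the square-free monomial $u_1\cdots u_n$ survive, let $t_i$ act as multiplication by $1+u_i$, and detect the failure of $M_n$ through the congruence $((1+u_1)^e\cdots(1+u_n)^e-1)^n\equiv e^n n!\,u_1\cdots u_n$ modulo $\mathfrak{m}^{n+1}$. Two differences are worth recording. First, your ideal $J=(u_i^au_j^b : a+b\ge c)$ is in general strictly smaller than the paper's $\mathfrak{c}$ (for $c=4$ the monomial $u_1^2u_2u_3$ lies in $\mathfrak{c}$ but not in $J$); both choices meet the two competing constraints, and your decision to work over $\Z$ from the outset, checking by hand that $1+u_i$ is a unit of $R/J$, replaces the paper's detour through the regular representation over $\Q$ and the observation that the resulting matrices are integral and unitriangular. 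Second, and more substantively, for part (iii) the paper invokes the converse half of Theorem~\ref{malcev on union of cosets}, whose proof requires the B\'ezout/injectivity argument for nilpotent actions on torsion-free modules; you bypass this by computing the coefficient of $u_1\cdots u_n$ in $f_n(s_e,s_e)=(s_e-1)^n h(s_e)$ directly, using $h(1)=2^{\binom{n}{2}}\ne 0$ from the factorization (\ref{igualdad polinomios1bis}), and then appealing only to Lemma~\ref{substitution} with $b$ the identity. This makes the negative half of the lemma self-contained, at the cost of an explicit (but easy) coefficient calculation. Both routes are valid, and all the verifications you sketch (freeness of $A_n$ over $\Z$, nilpotency via $\mathfrak{m}^N\subseteq J$, torsion-freeness of the split extension) go through.
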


\begin{proof}
The idea of the proof is to put $A_n=\Z^d$ for some $d$ (to be determined in the
course of the proof), and to let $t_1,\ldots,t_n$ be commuting matrices in $GL_d(\Z)$
which fulfil the necessary conditions for $T_n$ to satisfy $M_c(x,y)$, and for $G_n$
not to satisfy $M_n(x,y)$.
These are the conditions that can be read in Theorem~\ref{malcev on union of cosets}.
The matrices $t_1,\ldots,t_n$ will arise from the regular representation of an
appropriate quotient of the algebra of polynomials $\Q[X_1,\ldots,X_n]$.

Consider the ideal
\[
\mathfrak{a}
=
((X_1-1)^{c},\ldots,(X_n-1)^{c}, (X_i-1)(X_iX_j-1)^{c-1}
\mid 1\le i\ne j\le n)
\]
of $\Q[X_1,\ldots,X_n]$.
Under the isomorphism $X_i\mapsto X_i+1$, this ideal maps onto
\[
\mathfrak{b}
=
(X_1^c,\ldots,X_n^c,X_i(X_i+X_j+X_iX_j)^{c-1}
\mid 1\le i\ne j\le n).
\]
One can readily check that $\mathfrak{b}$ is contained in the monomial ideal
\[
\mathfrak{c}
=
(X_1^{i_1}\ldots X_n^{i_n} \mid
\text{$i_1+\cdots+i_n=c$ and $i_j\ge 2$ for some $j$});
\]
note that $c\ge 3$ is necessary for this.
Also, if $\mathfrak{m}$ is the maximal ideal of $\Q[X_1,\ldots,X_n]$ generated by
all the indeterminates $X_1,\ldots,X_n$, then $\mathfrak{m}^{n+1}$ is contained
in $\mathfrak{c}$, since $n\ge c$.

Let $e\ge 1$ be an arbitrary integer.
Since
\[
(X_1+1)^e(X_2+1)^e\ldots (X_n+1)^e-1
\equiv
e(X_1+X_2+\cdots+X_n)
\pmod{\mathfrak{m}^2},
\]
it follows that
\[
((X_1+1)^e(X_2+1)^e\ldots (X_n+1)^e-1)^n
\equiv
e^n(X_1+X_2+\cdots+X_n)^n
\pmod{\mathfrak{m}^{n+1}}.
\]
This last congruence also holds modulo $\mathfrak{c}$, since
$\mathfrak{m}^{n+1}\subseteq \mathfrak{c}$.
As a consequence, we have
\begin{equation}
\label{congruence}
((X_1+1)^e(X_2+1)^e\ldots (X_n+1)^e-1)^n
\equiv
e^n n! X_1\ldots X_n
\pmod{\mathfrak{c}}.
\end{equation}
On the other hand, by \cite[Lemma 2, page 67]{cox}, we have
\[
X_1\ldots X_n \not\in \mathfrak{c},
\]
since $\mathfrak{c}$ is a monomial ideal and $X_1\ldots X_n$ is not divisible
by any of the generators in the definition of $\mathfrak{c}$.
Thus, it follows from (\ref{congruence}) that
\begin{equation}
\label{not in c}
((X_{1}+1)^e(X_{2}+1)^e\ldots(X_{n}+1)^e-1)^n
\not\in \mathfrak{c}.
\end{equation}

Now, put $A=\Q[X_1,\ldots,X_n]/\mathfrak{c}$, and let $d$ be the dimension
of $A$ as a $\Q$-vector space.
The set
\[
\B
=
\{ X_1^{i_1}\ldots X_n^{i_n}+\mathfrak{c}
\mid
\text{$X_1^{i_1}\ldots X_n^{i_n}$ is not a multiple of a generator of $\mathfrak{c}$}
\}
\]
is a basis of $A$, by \cite[Proposition 4, page 229]{cox}.
We order $\B$ first by total degree of the monomials, and
then arbitrarily among monomials of the same degree.
Let us consider the regular representation $\varphi$ of $A$ in $M_d(\Q)$,
where matrices are taken with respect to the basis $\B$, and put
$t_i=\varphi(X_i+1+\mathfrak{c})$.
Obviously, $t_1,\ldots,t_n$ commute with each other.
Also, since the basis $\B$ consists only of monomials, and these are ordered
according to their degree, the matrices $t_i$ have only $0$ and $1$ entries,
and are upper unitriangular.
In other words, $t_i\in UT_d(\Z)$, the group of upper unitriangular matrices
over the integers.

Hence, we can consider the semidirect product $G_n=B_n\ltimes A_n$ of
the groups $A_n=\Z^d$ and $B_n=\langle t_1,\ldots,t_n \rangle$,
with respect to the natural action of $B_n$ on $A_n$.
Clearly, $G_n$ satisfies (i).
Since $UT_d(\Z)$ is a torsion-free group (see \cite[page 128]{rob}), also $B_n$ is
torsion-free.
Hence the same is true for $G_n$.
On the other hand, since $A_n$ and $B_n$ are abelian, we have
$\gamma_i(G_n)=[A_n,B_n,\overset{i-1}{\ldots},B_n]$ for all $i\ge 1$
(see Lemma 15.2 in Chapter 3 of \cite{hup}).
Since $B_n$ is contained in the unitriangular group, it follows that
$G_n$ is a nilpotent group.

On the other hand, since $X_i^c$ and $X_i(X_i+X_j+X_iX_j)^{c-1}$ lie in
$\mathfrak{c}$, it readily follows that $(t_i-1)^c=(t_i-1)(t_it_j-1)^{c-1}=0$
for all $1\le i\ne j\le n$.
Also, as a consequence of (\ref{not in c}), we have
$(t_1^e\ldots t_n^e-1)^n\ne 0$ for every $e\ge 1$.
Thus we can conclude from Theorem~\ref{malcev on union of cosets} that
$T_n$ satisfies $M_c(x,y)$, and that $M_n(x,y)$ is never satisfied in a coset
of the form $t_1^e\ldots t_n^eA_n$, with $e\ge 1$. 
\end{proof}

We are now ready to prove our main theorem, Theorem \ref{mainthm}.

\begin{thm}
For every $c\ge 3$, there exists an infinitely generated metabelian group $G$
such that:
\begin{enumerate}
\item
$G$ is a residually finite $p$-group for all primes $p$.
\item
$G$ can be generated by a commutator-closed normal subset $T$ satisfying the positive law
$M_c(x,y)$.
\item
$G$ does not satisfy any positive laws.
\end{enumerate}
\end{thm}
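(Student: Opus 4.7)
The plan is to take $G$ as the restricted direct product
\[
G := \bigoplus_{n \geq c} G_n
\]
of the groups supplied by Lemma \ref{Gn}, identifying each $G_n$ with its image $\iota_n(G_n)$ under the $n$-th coordinate embedding, and to set $T := \bigcup_{n \geq c} \iota_n(T_n)$. Assertion (i) follows from standard facts: each $G_n$ is a finitely generated torsion-free nilpotent group, hence a residually finite $p$-group for every prime $p$ by Gruenberg's theorem, and this property is preserved under direct sums; metabelianness and infinite generation are immediate. For (ii), normality, generation, and commutator-closedness of $T$ reduce to those of $T_n$ in $G_n$, because $\iota_n(G_n)$ and $\iota_m(G_m)$ commute elementwise when $n\neq m$ and $1\in T_n$ absorbs cross-summand commutators. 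The Malcev law $M_c$ on $T$ holds because a substitution with both arguments in a single $\iota_n(T_n)$ is covered by Lemma \ref{Gn}(ii), while a substitution across distinct summands produces commuting arguments on which $\alpha_c$ and $\beta_c$ agree (they have the same weight in each variable, as a straightforward induction on $c$ shows).

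The core of the proof is (iii). Suppose for contradiction that $G$ satisfies some positive law of degree $d$; then every subgroup of $G$, and in particular each $G_n$, satisfies the same law. I would invoke a quantitative form of the Burns--Medvedev theorem \cite{bur} to extract a function $\phi(d)$ such that any finitely generated torsion-free nilpotent group satisfying a positive law of degree $d$ has nilpotency class at most $\phi(d)$, and therefore satisfies $M_{\phi(d)}(x,y)$. Since $M_k$ implies $M_{k+1}$ (if $\alpha_k\equiv\beta_k$ then $\alpha_{k+1}=\alpha_k\beta_k=\beta_k\alpha_k=\beta_{k+1}$), each $G_n$ would satisfy $M_n$ for every $n\ge\phi(d)$, contradicting Lemma \ref{Gn}(iii) as soon as $n\ge\max(c,\phi(d))$.

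The main obstacle lies precisely in extracting the bound $\phi(d)$. Burns--Medvedev directly give only that a locally graded group satisfying a positive law of degree $d$ is (nilpotent of class $\le c(d)$)-by-(locally finite of exponent $\le e(d)$); one still needs to observe that, on a finitely generated torsion-free nilpotent group, this decomposition collapses into a pure class bound, since the locally finite factor is then finite and (by standard arguments on finite-index subgroups of torsion-free nilpotent groups, for instance via isolators of the lower central series) the class of the ambient group is controlled by the class and index of the given finite-index nilpotent subgroup. Once this reduction is in place, the contradiction described above completes the proof.
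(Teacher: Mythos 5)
Your construction of $G$ and $T$ and your treatment of parts (i) and (ii) coincide with the paper's, but your argument for (iii) takes a genuinely different route. The paper applies Burns--Medvedev once to the whole group $G$, obtaining a normal nilpotent subgroup $N$ of some class $k$ with $G/N$ of some exponent $e$; since this only forces the law $M_k(x,y)$ on $G_k^e$ rather than on all of $G_k$, the paper then has to work with the coset $(t_1\cdots t_k)^eA_k^e$, pass from $A_k^e$ to $A_k$ via torsion-freeness and Theorem~\ref{malcev on union of cosets}, and invoke the \emph{strong} form of Lemma~\ref{Gn}(iii) (failure of $M_n$ on every coset $(t_1\cdots t_n)^eA_n$). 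You instead apply a quantitative Burns--Medvedev bound to each $G_n$ separately and collapse the nilpotent-by-(finite exponent) decomposition to a pure class bound $\phi(d)$ using the isolator theorem for finite-index subgroups of torsion-free nilpotent groups; this lets you contradict only the weak form of Lemma~\ref{Gn}(iii). Both steps you need are genuine theorems: Burns--Medvedev do provide functions $c(d)$, $e(d)$ depending only on the degree, and P.~Hall's isolator theory does give that a torsion-free nilpotent group has the same class as any finite-index subgroup (the index is irrelevant, only its finiteness matters, and finiteness of $G_n/N$ follows since $G_n$ is finitely generated and $G_n/N$ is locally finite). Note, though, that you could dispense with the quantitative form entirely by running your own argument after a single application of Burns--Medvedev to $G$: intersect the resulting $N$ with each $G_n$, observe that $G_n\cap N$ has finite index and class at most $k$, and conclude by the same isolator argument that $G_n$ has class at most $k$ uniformly in $n$. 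What the paper's approach buys is independence from both the quantitative bounds and the isolator theorem, at the cost of the explicit endomorphism computation and the stronger coset statement in Lemma~\ref{Gn}(iii); what yours buys is a cleaner contradiction that never needs that stronger statement.
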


\begin{proof}
In the proof, we use the same notation as in Lemma~\ref{Gn}.
We define $G$ to be the restricted direct product $\prod_{n\ge c}\, G_n$.
Note that $G$ is metabelian.
Since $G_n$ is a finitely generated nilpotent torsion-free group, it is a residually finite $p$-group 
for all primes $p$, by a result of Gruenberg \cite{gru}.
As a consequence, the same is true for $G$ and (i) holds.

Since the direct product $G$ is restricted and $G_n=\langle T_n \rangle$ for all $n$,
it follows that the subset $T=\cup_{n\ge c} \, T_n$ generates $G$.
By the definition of $T_n$, it is clear that it is a normal subset of $G_n$, and also
commutator-closed.
(Recall that $t_1,\ldots,t_n$ commute with each other.)
As a consequence, $T$ is commutator-closed and a normal subset of $G$.
Also, since every $T_n$ satisfies the law $M_c(x,y)$, also does $T$: note that two
elements from $T_n$ and $T_m$, with $n\ne m$, commute.
Thus we obtain (ii).

Finally, let us see that $G$ cannot satisfy a positive law.
Otherwise, by the result of Burns and Medvedev mentioned in the introduction, $G$ has
a normal nilpotent subgroup $N$ such that $G/N$ has finite exponent.
Let $k$ and $e$ be the class of $N$ and the exponent of $G/N$, respectively.
Then the subgroup $G_k^e$ satisfies the law $M_k(x,y)$ and, in particular, the same is
true for the coset $(t_1\ldots t_k)^eA_k^e$.
Now, it follows from Theorem 2.2 that the endomorphism $(t_1^e\ldots t_k^e-1)^k$ is
zero on the abelian group $A_k^e$.
Since $A_k$ is a torsion-free group, $(t_1^e\ldots t_k^e-1)^k$ is also zero as an
endomorphism of $A_k$.
This means that the coset $(t_1\ldots t_k)^eA_k$ satisfies $M_k(x,y)$, which is a
contradiction, according to part (iii) of Lemma~\ref{Gn}. 
\end{proof}

\section*{Acknowledgments}
The authors  are supported by the Spanish Government, grant
MTM2008-06680-C02-02, partly with FEDER funds, and by the
Basque Government, grants IT-252-07 and IT-460-10.
The first author is also supported by a grant of the University
of L'Aquila.


\begin{thebibliography}{9}

\bibitem{baj}
B. Bajorska and O. Macedo\'{n}ska.
`On positive law problems in the class of locally graded groups'.
\textit{Comm. Algebra} \textbf{32} (2004), 1841--1846.

\bibitem{bur}
R.G. Burns and Yu. Medvedev.
`Groups laws implying virtual nilpotence'.
\textit{J. Austral. Math. Soc.} \textbf{74} (2003), 295--312.

\bibitem{cox}
D. Cox, J. Little and D. O'Shea.
\textit{Ideals, Varieties, and Algorithms}
(Springer, 1997), 2nd edn.

\bibitem{fer}
G.A. Fern\'andez-Alcober and P. Shumyatsky.
`Positive laws on word values in residually-$p$ groups'.
Preprint.

\bibitem{gru}
K.W. Gruenberg.
`Residual properties of infinite soluble groups'.
\textit{Proc. London Math. Soc.}(3) \textbf{7} (1957), 29--62.

\bibitem{hup}
B. Huppert.
\textit{Endliche Gruppen I}
(Springer, 1967).

\bibitem{ols}
A.Yu. Olshanskii and A. Storozhev.
`A group variety defined by a semigroup law'.
\textit{J. Austral. Math. Soc. (Series A)} \textbf{60} (1996),
225--259.

\bibitem{rob}
D.J.S. Robinson.
\textit{A Course in the Theory of Groups}
(Springer, 1996), 2nd edn.

\end{thebibliography}
\end{document}